\documentclass[10pt]{amsart}

\usepackage[margin=1in]{geometry}
\usepackage{amsmath}
\usepackage{amsxtra}
\usepackage{amstext}
\usepackage{amssymb}

\newtheorem{definition}{Definition}

\newtheorem{example}{Example}

\newtheorem{theorem}{Theorem}[section]
\newtheorem{lemma}[theorem]{Lemma}

\newcommand{\expect}[1]{\mathbb{E}\left[#1\right]}

\newcommand{\indicator}[1]{\mathbb{I}_{#1}}

\title{Properties of Conditional Expectation Operators and Sufficient Subfields}

\author{Andrew Tausz}
\address{Stanford University, Stanford, CA, 94305}
\email{atausz@stanford.edu}

\date{May 13, 2010}

\begin{document}
\maketitle

\begin{abstract}
We discuss some properties of conditional expectation operators, and use these facts to prove an interesting counterexample regarding sufficient statistics. In particular, we show that there exists sufficient random variables $X$ and $Y$, such that $(X, Y)$ are jointly not sufficient. We follow the work in \cite{1962} and \cite{1961}, presenting complete proofs and supplying missing steps. 


\end{abstract}

\section{Introduction}

For now, we fix a probability space $(\Omega, \mathcal{F}, \mathbb{P})$. Let $X \in L_1(\Omega, \mathcal{F}, \mathbb{P})$ be an integrable random variable. Then, if $\mathcal{G}$ is a sub-$\sigma$-field (which we simply call a subfield) of $\mathcal{F}$, the conditional expectation of $X$ with respect to $\mathcal{G}$ is defined to be the random variable $\expect{X | \mathcal{G}}$ that satisfies:

\begin{enumerate}
\item For all $B \in \mathcal{G}$, $\expect{\expect{X | \mathcal{G}} \indicator{B}}  = \expect{X \indicator{B}}$
\item $\expect{X | \mathcal{G}}$ is $\mathcal{G}$-measurable
\end{enumerate}

The existence of the conditional expectation is a standard result that can be found in \cite{schervish}. We say that an operator $T$ on $L_1$ is a conditional expectation operator if $T = \expect{\cdotp | \mathcal{G}}$ for some sub-sigma field $\mathcal{G} \subset \mathcal{F}$. Let $\{T_n\}$ be a sequence of conditional expectation operators and write the sequence of iterates as $S_n = T_n \cdots T_2 T_1$. In this report we will discuss certain properties of the iterates of a sequence of conditional expectation operators. 

Before diving into the material, a notational explanation is in order. Unfortunately in the world of probability, capital letters are used to denote random variables, but in the world of functional analysis lower case letters seem to be popular for elements of function spaces. Although this paper uses ideas from both fields, our ultimate goal is statistical, so we will use capital letters. The letters $T$ and $Q$ are reserved for operators on a Hilbert space, and $X$ is always an element of the function space in question. 

An important interpretation of the conditional expectation is that it is an orthogonal projection operator onto the space of $\mathcal{G}$-measurable functions within the Hilbert space $L_2(\Omega, \mathcal{F}, \mathbb{P})$. This perspective will be the driving force behind most of the ideas here. For completeness we include the following result.

\begin{theorem}
Let $T$ be the operator on the Hilbert space $L_2(\Omega, \mathcal{F}, \mathbb{P})$ defined by $T(X) = \expect{X | \mathcal{G}}$. Then $T$ has the following properties.
\begin{enumerate}
\item $T$ is self-adjoint: $T^{*} = T$
\item $T$ is idempotent: $T^2 = T$
\item $I - T$ is the projection onto $\left(L_2(\Omega, \mathcal{G}, \mathbb{P})\right)^{\perp}$
\item $T$ is a contraction: $||T|| \leq 1$
\end{enumerate}
\end{theorem}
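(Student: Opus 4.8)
The plan is to reduce everything to a single measure-theoretic lemma --- the \emph{averaging} (or pull-out) property --- and then derive the four assertions by formal Hilbert-space manipulations. The foundational step, and the one I expect to be the main obstacle, is to upgrade the defining identity $\expect{\expect{X|\mathcal{G}}\indicator{B}} = \expect{X\indicator{B}}$ from indicators $\indicator{B}$ with $B \in \mathcal{G}$ to the statement that
\[
\expect{(TX) Z} = \expect{X Z}
\]
for every $\mathcal{G}$-measurable $Z \in L_2$. First I would verify this for simple $\mathcal{G}$-measurable $Z$ by linearity in the defining property, and then pass to bounded $\mathcal{G}$-measurable $Z$ by approximating with simple functions and invoking dominated convergence; the integrability needed here comes from $X, TX \in L_2 \subset L_1$ together with the boundedness of $Z$. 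A truncation argument, controlled by the Cauchy--Schwarz bound $\expect{|X|\,|Z|} \le \|X\|_2 \|Z\|_2$, then extends the identity to all $\mathcal{G}$-measurable $Z \in L_2$, which is the only case we ever apply (with $Z$ of the form $TY$).

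Granting the averaging property, self-adjointness and idempotence follow quickly. For self-adjointness, recall that the $L_2$ inner product is $\langle X, Y \rangle = \expect{XY}$. Since $TX$ is $\mathcal{G}$-measurable, applying the averaging identity with multiplier $TX$ gives $\langle TX, Y\rangle = \expect{(TX)Y} = \expect{(TX)(TY)}$; the symmetric computation yields $\langle X, TY\rangle = \expect{(TY)(TX)}$, and since the right-hand sides agree we conclude $\langle TX, Y\rangle = \langle X, TY\rangle$, i.e. $T^{*} = T$. For idempotence, observe that $TX = \expect{X|\mathcal{G}}$ is already $\mathcal{G}$-measurable, and any $\mathcal{G}$-measurable $W \in L_2$ trivially satisfies both defining conditions of its own conditional expectation, so $\expect{W|\mathcal{G}} = W$; applying this with $W = TX$ yields $T^2 X = TX$.

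Finally, properties (3) and (4) are consequences of the general theory of orthogonal projections, which a self-adjoint idempotent operator $T$ must be. Its range is exactly the closed subspace $L_2(\Omega, \mathcal{G}, \mathbb{P})$: the inclusion $\subseteq$ holds because every $TX$ is $\mathcal{G}$-measurable, and $\supseteq$ because $TW = W$ for such $W$. For (3), I would note that $I - T$ is again self-adjoint and idempotent, hence itself an orthogonal projection, and identify its range with $\ker T$; using self-adjointness one checks $TX = 0$ if and only if $X \perp L_2(\Omega, \mathcal{G}, \mathbb{P})$ (the forward direction since $\langle X, Y\rangle = \langle X, TY\rangle = \langle TX, Y\rangle = 0$ for $Y$ in the subspace, the reverse by taking $Y = TX$ to force $\|TX\|^2 = \langle X, TX\rangle = 0$), whence $\mathrm{range}(I-T) = \left(L_2(\Omega, \mathcal{G}, \mathbb{P})\right)^{\perp}$. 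For (4), the Pythagorean identity $\|X\|^2 = \|TX\|^2 + \|(I-T)X\|^2$, valid since $TX \perp (I-T)X$, immediately gives $\|TX\| \le \|X\|$ and hence $\|T\| \le 1$; equivalently this is Jensen's inequality $\expect{(TX)^2} \le \expect{\expect{X^2|\mathcal{G}}} = \expect{X^2}$.
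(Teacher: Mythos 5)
Your proof is correct, but it takes a route the paper itself does not: the paper gives no argument at all for this theorem, disposing of it with the single line ``These results follow from the basic properties of conditional expectations'' and a citation to a textbook. Your proposal supplies the missing content, and its structure is sound: upgrading the defining identity from indicators of sets in $\mathcal{G}$ to the averaging property $\expect{(TX)Z} = \expect{XZ}$ for all $\mathcal{G}$-measurable $Z \in L_2$ is exactly the right key lemma, and from it the derivations of self-adjointness, idempotence, the identification of the range of $I-T$ with $\left(L_2(\Omega,\mathcal{G},\mathbb{P})\right)^{\perp}$, and the contraction bound via the Pythagorean identity are all correctly executed. One point deserves explicit care: your truncation step applies Cauchy--Schwarz with $TX$ in place of $X$, so it presupposes $TX \in L_2$. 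This is harmless here, since the theorem statement already posits $T$ as an operator on $L_2$, and your closing remark via conditional Jensen, $\expect{(TX)^2} \le \expect{\expect{X^2|\mathcal{G}}} = \expect{X^2}$, gives an independent proof of that fact; but in a fully self-contained write-up you should either establish $T(L_2) \subset L_2$ first or flag it as part of the hypothesis, to avoid the appearance of circularity with item (4). As for the comparison: your approach buys self-containedness, which is very much in the spirit of the paper's stated goal of ``presenting complete proofs and supplying missing steps,'' and it makes visible that everything downstream (Lemma 3.3, Theorem 4.1) rests only on self-adjointness, idempotence, and the contraction property; the paper's citation buys brevity for facts that are genuinely textbook material and are not where its novelty lies.
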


\begin{proof}
These results follow from the basic properties of conditional expectations. See \cite{Shorack}.
\end{proof}

\section{Some Preparatory Lemmas}

We begin by stating and proving a lemma found in \cite{1961}. 

\begin{definition}
A sequence $\{a_n\}_1^{\infty}$ of real numbers is said to be convex if $a_{i+1} \leq \frac{1}{2}(a_i + a_{i+2})$ for all $i \geq 1$.
\end{definition}

\begin{lemma}
Let $\{a_n\}_1^{\infty}$ be a convex and bounded sequence of real numbers. Then 
$$\sum_{n=1}^{\infty} n \Delta^2 a_n = a_1 - \lim_{n \rightarrow \infty} a_n$$
\end{lemma}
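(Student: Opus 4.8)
The plan is to reduce the infinite series to a closed-form partial sum via summation by parts, and then control the resulting boundary term. Throughout I write $\Delta a_n = a_{n+1} - a_n$ for the forward difference, so that $\Delta^2 a_n = \Delta(\Delta a_n) = a_{n+2} - 2a_{n+1} + a_n$; the convexity hypothesis $a_{n+1} \le \frac{1}{2}(a_n + a_{n+2})$ is then exactly the statement $\Delta^2 a_n \ge 0$, i.e. the differences $b_n := \Delta a_n$ form a non-decreasing sequence.

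First I would record the structural consequences of boundedness. If some $b_N > 0$, then monotonicity of $\{b_n\}$ forces $b_n \ge b_N > 0$ for all $n \ge N$, so $a_n \to +\infty$, contradicting boundedness; hence $b_n \le 0$ for every $n$ and $\{a_n\}$ is non-increasing. Being non-increasing and bounded below, $\{a_n\}$ converges to some limit $L = \lim_n a_n$, and telescoping gives $\sum_{n=1}^{\infty} (-b_n) = a_1 - L < \infty$. Thus $c_n := -b_n \ge 0$ is a \emph{non-increasing} (because $b_n$ is non-decreasing) and \emph{summable} sequence.

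Next I would compute the partial sums explicitly. Applying summation by parts to $\sum_{n=1}^{N} n\,\Delta^2 a_n = \sum_{n=1}^{N} n\,(b_{n+1}-b_n)$ and telescoping $\sum_{n=2}^{N} b_n = a_{N+1}-a_2$ yields, after simplification,
\[
\sum_{n=1}^{N} n\,\Delta^2 a_n = a_1 - a_{N+1} + N\,(a_{N+2}-a_{N+1}) = a_1 - a_{N+1} + N b_{N+1}.
\]
Since $a_{N+1} \to L$, the theorem reduces to showing that the boundary term $N b_{N+1}$ tends to $0$.

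This boundary term is where the real work lies, and I expect it to be the main obstacle. It amounts to the classical fact that a non-increasing nonnegative summable sequence $c_n$ satisfies $n c_n \to 0$. I would prove this via the Cauchy criterion: convergence of $\sum c_n$ forces the block sums $\sum_{k=n+1}^{2n} c_k \to 0$, while monotonicity gives $\sum_{k=n+1}^{2n} c_k \ge n\,c_{2n}$, so $2n\,c_{2n}\to 0$; a parallel estimate handles the odd indices, giving $n c_n \to 0$. Applying this to $c_n = -b_n$ shows $N b_{N+1} \to 0$ (using $|N b_{N+1}| \le (N+1)c_{N+1} \to 0$), and letting $N \to \infty$ in the displayed identity completes the proof. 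The only points requiring care are the bookkeeping in the summation-by-parts step and confirming that the even/odd index argument indeed yields $n c_n \to 0$ rather than merely convergence along the even subsequence.
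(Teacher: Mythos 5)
Your proposal is correct, and all the sketched steps check out: the identity $\sum_{n=1}^{N} n\,\Delta^2 a_n = a_1 - a_{N+1} + N(a_{N+2}-a_{N+1})$ is right, and the classical fact that a nonnegative, non-increasing, summable sequence satisfies $n c_n \to 0$ (via the block estimate $\sum_{k=n+1}^{2n} c_k \ge n\,c_{2n}$) correctly kills the boundary term. However, your route is genuinely different from the paper's. The paper writes the weight as $n = \sum_{k=1}^{n} 1$ and interchanges the order of summation in the resulting double series --- justified by Tonelli since $\Delta^2 a_n \ge 0$ --- after which the sum collapses under two telescopings; the only analytic input it needs is $\Delta a_n \to 0$, which follows from monotonicity of $\Delta a_n$ (convexity) plus boundedness of $a_n$. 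Your Abel-summation approach trades that interchange for explicit control of the boundary term $N\,\Delta a_{N+1}$, which is a strictly stronger fact ($n\,\Delta a_n \to 0$ rather than just $\Delta a_n \to 0$) and is exactly why you need the extra monotone-summable lemma. What each buys: the paper's swap is shorter and dodges the boundary-term issue entirely, while yours stays with finite partial sums, avoids any manipulation of infinite double series, and produces the quantitative byproduct $n\,\Delta a_n \to 0$ along the way. Incidentally, your bookkeeping is cleaner than the paper's own display, which contains sign slips (the telescoped inner sum should be $-\Delta a_k$ plus a vanishing limit, not $+\Delta a_k$), though the paper's intended argument is clear.
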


\begin{proof}
We note that $\Delta^2 a_n = a_{n+2} - 2 a_{n+1} + a_n$ is non-negative, by the assumption of convexity. Thus we can rearrange the sum as we wish. In particular, we are free to use Fubini's theorem.

\begin{eqnarray*}
\sum_{n=1}^{\infty} n \Delta^2 a_n & = & \sum_{n=1}^{\infty} \sum_{k=1}^{n} \Delta^2 a_n\\
 & = & \sum_{k=1}^{\infty} \sum_{n=k}^{\infty} \Delta^2 a_n\\
 & = & \sum_{k=1}^{\infty} \sum_{n=k}^{\infty} \Delta a_{n+1} - \Delta a_{n}\\
 & = & \sum_{k=1}^{\infty} \Delta a_{k} + \lim_{n \rightarrow \infty} \Delta a_{n+1}\\
 & = & a_{1} - \lim_{n \rightarrow \infty} a_n
\end{eqnarray*}

\end{proof}

\begin{lemma}
Let $\{a_n\}_{n=0}^{\infty}$ be a sequence of complex numbers such that $c^2 = \sum_{n=1}^{\infty}n|a_n - a_{n+1}|^2 < \infty$. Define 
$$b_n = \sum_{k=1}^{2^n} \frac{a_k}{2^n}$$
Then
\begin{enumerate}
\item $\sup_{n \geq 1} |a_n| \leq 3 \left( \sup_{n \geq 0} |b_n| \right) + |c|$
\item If $\lim_{n \rightarrow \infty} b_n = a_0$, then $\lim_{n \rightarrow \infty} a_n = a_0$
\end{enumerate}
\end{lemma}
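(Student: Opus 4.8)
The plan is to reduce both statements to a single estimate: the oscillation of the sequence within a dyadic block is controlled by a tail of the series $\sum n|a_n - a_{n+1}|^2$. Throughout I would abbreviate $d_n = a_n - a_{n+1}$, so the hypothesis reads $\sum_{n=1}^{\infty} n|d_n|^2 = c^2$, and I would introduce the block averages
$$B_n = \frac{1}{2^{n-1}} \sum_{k=2^{n-1}+1}^{2^n} a_k \qquad (n \ge 1),$$
that is, the average of $a_k$ over the dyadic block $(2^{n-1}, 2^n]$. The first bookkeeping step is the telescoping identity $2^n b_n - 2^{n-1} b_{n-1} = 2^{n-1} B_n$, which rearranges to
$$B_n = 2 b_n - b_{n-1}.$$
This is the bridge that lets me pass from the cumulative averages $b_n$, which are what the hypotheses speak about, to the block averages $B_n$, which sit close to the individual terms $a_j$.

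The core estimate is a block-oscillation bound. For any $j, k$ lying in the same block $(2^{n-1}, 2^n]$, I would write $a_j - a_k$ as a telescoping sum of the $d_i$ and apply Cauchy--Schwarz to obtain
$$|a_j - a_k| \le \sum_{i=2^{n-1}+1}^{2^n - 1} |d_i| \le \left( \sum_{i=2^{n-1}+1}^{2^n - 1} \frac{1}{i} \right)^{1/2} \left( \sum_{i=2^{n-1}+1}^{2^n - 1} i |d_i|^2 \right)^{1/2}.$$
Here the harmonic factor is bounded by $\ln 2 < 1$ uniformly in $n$, since the index ranges over roughly a doubling, while the second factor is a tail of the convergent series defining $c^2$. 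Consequently $\sup_{j,k \in (2^{n-1}, 2^n]} |a_j - a_k| \le |c|$ for every $n$, and this supremum tends to $0$ as $n \to \infty$. In particular, if $j$ lies in block $n$ then $|a_j - B_n| \le |c|$.

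With these two facts both claims follow quickly. For (1), I would fix $j \ge 2$, let $n$ be the block index with $2^{n-1} < j \le 2^n$, and write $|a_j| \le |B_n| + |a_j - B_n|$. The identity bounds $|B_n| = |2 b_n - b_{n-1}| \le 3 \sup_{m \ge 0} |b_m|$, and the oscillation bound gives $|a_j - B_n| \le |c|$; the case $j = 1$ is immediate since $a_1 = b_0$. For (2), the same identity shows that $b_n \to a_0$ forces $B_n = 2 b_n - b_{n-1} \to a_0$, and combining this with $|a_j - B_n| \to 0$ (the tail of the series vanishing) yields $a_j \to a_0$ by the triangle inequality.

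The main obstacle is recognizing at the outset that the cumulative averages $b_n$ are too coarse to pin down a single $a_j$, so one must interpose the block averages $B_n$ together with the identity $B_n = 2 b_n - b_{n-1}$. Once that structure is in place, the only quantitative point demanding care is verifying that the dyadic harmonic sum is bounded by a constant strictly below $1$; this is precisely what produces the clean coefficient $|c|$ and, via the identity, the factor $3$ on $\sup_n |b_n|$.
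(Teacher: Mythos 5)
Your proposal is correct and follows essentially the same route as the paper: the paper's key identity $|a_n| = |b_m - 2b_{m+1} + \sum_{k=2^m+1}^{2^{m+1}} 2^{-m}(a_k - a_n)|$ is exactly your decomposition $a_j = B_{n} + (a_j - B_{n})$ with $B_n = 2b_n - b_{n-1}$, and the paper likewise controls the block oscillation by Cauchy--Schwarz against the tail $c_m^2 = \sum_{k \geq 2^m} k|a_k - a_{k+1}|^2$. The only cosmetic difference is that you run Cauchy--Schwarz with harmonic weights (picking up the constant $\sqrt{\ln 2} < 1$), while the paper uses the unweighted inequality together with the bound $2^m \leq k$ on the block; both yield the same estimate $|a_j - B_n| \leq |c_m| \leq |c|$ and the same factor of $3$ from $|b_m| + 2|b_{m+1}|$.
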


\begin{proof}
Suppose without loss of generality that $a_0 = 0$. If $a_0 \neq 0$, then we can simply subtract $a_0$ from each term and the proof would still work. Define
$$c_{n}^2 = \sum_{k=2^n}^{\infty} k|a_k - a_{k+1}|^2$$
Then we have that
\begin{eqnarray*}
\max_{2^n \leq j \leq k \leq 2^{n+1}} |a_j - a_k| & \leq & \sum_{k=2^n}^{2^{n+1}-1} |a_k - a_{k+1}|\\
 & \leq & \left(2^n \sum_{k=2^n}^{2^{n+1}-1} |a_k - a_{k+1}| \right)^{\frac{1}{2}}\\
 & \leq & \left(\sum_{k=2^n}^{2^{n+1}-1} k |a_k - a_{k+1}| \right)^{\frac{1}{2}}\\
 & \leq & \left(\sum_{k=2^n}^{\infty} k |a_k - a_{k+1}| \right)^{\frac{1}{2}}\\
 & = & |c_n|
\end{eqnarray*}
Let $m$ be the integer that satisfies $2^m \leq n \leq 2^{n+1}$. Then we have that
\begin{eqnarray*}
|a_n| & = & |b_m - 2b_{m+1} + \sum_{k=2^m+1}^{2^{m+1}} 2^{-m} (a_k - a_n)|\\
 & \leq & |b_m| + 2|b_{m+1}| + |c_m|
\end{eqnarray*}
Taking supremums of both sides of the above, we get that
$$\sup_{n \geq 1} |a_n| \leq 3 \left( \sup_{n \geq 0} |b_n| \right) + |c|$$
To get the second result, suppose that $\lim_{n \rightarrow \infty} b_n = a_0 = 0$. Then we have that
\begin{eqnarray*}
\lim_{n \rightarrow \infty} a_n & = & \lim_{m \rightarrow \infty} \left( b_m - 2b_{m+1} + \sum_{k=2^m+1}^{2^{m+1}} 2^{-m} (a_k - a_n) \right)\\
 & = & \lim_{m \rightarrow \infty} b_m - \lim_{m \rightarrow \infty} 2b_{m+1} + \lim_{m \rightarrow \infty} \sum_{k=2^m+1}^{2^{m+1}} 2^{-m} (a_k - a_n)\\
 & = & 0
\end{eqnarray*}
\end{proof}

\begin{lemma}
Let $T$ be a linear self-adjoint operator with norm $||T|| \leq 1$ on a Hilbert space $\mathcal{H}$. Then $\forall X \in \mathcal{H}$,
$$\sum_{n=1}^{\infty} ||T^n(X) - T^{n+2}(X)||^2 \leq ||X||^2$$
Furthermore, the $L_2$-limit of $T^{2n}$ exists and is an orthogonal projection operator.
\end{lemma}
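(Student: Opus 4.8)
The plan is to reduce both claims to the behavior of the scalar sequence $v_n = \|T^n(X)\|^2$. The key observation, which is exactly where self-adjointness enters, is that
$$\|T^n(X) - T^{n+2}(X)\|^2 = v_n - 2v_{n+1} + v_{n+2} = \Delta^2 v_n,$$
because $\langle T^n(X), T^{n+2}(X)\rangle = \langle X, T^{2n+2}(X)\rangle = \|T^{n+1}(X)\|^2 = v_{n+1}$ when $T^{*} = T$. Since $\|T\| \leq 1$, we also have $v_{n+1} = \|T(T^n X)\|^2 \leq \|T^n X\|^2 = v_n$, so $\{v_n\}$ is non-increasing and bounded below by $0$, hence convergent; I will call its limit $v_\infty$.

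For the first claim I would telescope the partial sums. Writing $\Delta v_n = v_{n+1} - v_n$, the finite sum collapses to $\sum_{n=1}^N \Delta^2 v_n = \Delta v_{N+1} - \Delta v_1$. Since $v_n$ converges, $\Delta v_{N+1} = v_{N+2} - v_{N+1} \to 0$, so (the terms being nonnegative) the full series converges to $-\Delta v_1 = v_1 - v_2 = \|T(X)\|^2 - \|T^2(X)\|^2 \leq \|T(X)\|^2 \leq \|X\|^2$, which is precisely the asserted bound.

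For the second claim I would establish that $\{T^{2n}(X)\}$ is Cauchy by the same self-adjointness trick: for $n \leq m$,
$$\|T^{2n}(X) - T^{2m}(X)\|^2 = \|T^{2n}X\|^2 - 2\langle T^{2n}X, T^{2m}X\rangle + \|T^{2m}X\|^2 = v_{2n} - 2v_{n+m} + v_{2m}.$$
As $n, m \to \infty$ all three terms tend to $v_\infty$, so the right side tends to $0$ and the sequence converges in norm. I then define $P(X) = \lim_n T^{2n}(X)$, which is linear with $\|P\| \leq 1$. To identify $P$ as an orthogonal projection I would verify self-adjointness by passing to the limit in $\langle T^{2n}X, Y\rangle = \langle X, T^{2n}Y\rangle$, and idempotence by observing that $T^2(PX) = \lim_n T^{2n+2}(X) = P(X)$, so $PX$ is fixed by $T^2$ and hence by every $T^{2m}$; this gives $P^2 = P$. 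A self-adjoint idempotent is an orthogonal projection, completing the argument.

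The main obstacle is spotting the reduction in the first display: once one recognizes that self-adjointness collapses $\|T^n X - T^{n+2}X\|^2$ into the second difference of $v_n = \|T^n X\|^2$, the summability is a one-line telescoping and the convergence of $T^{2n}$ is a routine Cauchy estimate. Everything else, namely the monotonicity of $v_n$ and the projection properties, then follows immediately, and notably no appeal to the spectral theorem is needed.
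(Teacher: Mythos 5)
Your proof is correct, and its core is the same as the paper's: self-adjointness collapses $\|T^n(X)-T^{n+2}(X)\|^2$ into the second difference $\Delta^2 v_n$ of $v_n=\|T^n(X)\|^2$, and the identity $\langle T^{2n}(X),T^{2m}(X)\rangle = v_{n+m}$ drives the Cauchy estimate. You diverge in two supporting steps, both in a more elementary direction. First, for the summability, the paper routes through its convex-sequence lemma (proved by a Fubini rearrangement), which evaluates the \emph{weighted} sum $\sum_{n\geq 1} n\,\Delta^2 v_n = v_1 - \lim_n v_n$ and then discards the weights; you instead telescope the unweighted series directly, using monotonicity of $v_n$ (immediate from $\|T\|\leq 1$) rather than convexity-plus-boundedness to get convergence, and you obtain the exact value $v_1 - v_2$. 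This is simpler and fully proves the lemma as stated, but note what the paper's detour buys: the weighted identity is what the paper actually needs downstream (in the proof of Theorem 3.4 it uses $\sum_n 2n\,\|T^{2n}(X)-T^{2n+2}(X)\|^2 \leq \|X\|^2$, which does not follow from the unweighted statement), so your leaner argument could not replace the paper's proof without reinstating the weights. Second, for idempotence the paper extracts the subsequence $T^{4n}\rightarrow Q$ and runs a three-term triangle inequality, whereas you observe that $P(X)$ is a fixed point of $T^2$ (by continuity of $T^2$), hence of every $T^{2m}$, hence of $P$; your fixed-point argument is cleaner and equally rigorous. Both proofs finish the same way: a self-adjoint idempotent bounded operator is an orthogonal projection.
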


\begin{proof}
Fix some $X \in \mathcal{H}$ and define $a_n = ||T^n(X)||^2$. Since $||T|| \leq 1$, we have that the sequence $\{a_n\}_1^{\infty}$ is bounded. This sequence is also convex, since (by using self-adjointness of $T$)
\begin{eqnarray*}
\Delta^2 a_n & = & a_{n+2} - 2 a_{n+1} + a_n \\
 & = & ||T^{n+2}(X)||^2 - 2 ||T^{n+1}(X)||^2 + ||T^{n}(X)||^2 \\
 & = & ||T^{n+2}(X)||^2 - 2 \langle T^{n+1}(X), T^{n+1}(X) \rangle + ||T^{n}(X)||^2 \\
 & = & ||T^{n+2}(X)||^2 - 2 \langle T^{n}(X), T^{n+2}(X) \rangle + ||T^{n}(X)||^2 \\
 & = & \langle T^{n}(X) - T^{n+2}(X), T^{n}(X) - T^{n+2}(X) \rangle \\
 & = & ||T^{n}(X) - T^{n+2}(X)||^2 \\
 & \geq & 0
\end{eqnarray*}
Thus by applying Lemma 1, we get that
$$\sum_{n=1}^{\infty} n ||T^{n}(X) - T^{n+2}(X)||^2 = ||T(X)||^2 - \lim_{n \rightarrow \infty} ||T^{n}(X)||^2$$
However, since $||T|| \leq 1$, we have that $||T(X)|| \leq ||X||$. By using this, and positivity of the limit, we get that
$$\sum_{n=1}^{\infty} ||T^n(X) - T^{n+2}(X)||^2 \leq ||X||^2$$
To show convergence of $\{T^{2n}\}$, we show that it is a Cauchy sequence.
\begin{eqnarray*}
\lim_{n, m \rightarrow \infty} ||T^{2n}(X) - T^{2m}(X)||^2 & = & \lim_{n, m \rightarrow \infty} \langle T^{2n}(X) - T^{2m}(X), T^{2n}(X) - T^{2m}(X) \rangle\\
 & = & \lim_{n, m \rightarrow \infty} ||T^{2n}(X)||^2 - 2 \langle T^{2n}(X), T^{2m}(X) \rangle + ||T^{2m}(X)||^2
\end{eqnarray*}
But we also have that by using the self-adjointness of $T$ we get
\begin{eqnarray*}
a_{n+m} & = & ||T^{n+m}(X)||^2\\
 & = & \langle T^{n+m}(X), T^{n+m}(X) \rangle\\
 & = & \langle T^{m}(X), T^{2n+m}(X) \rangle\\
 & = & \langle T^{2m}(X), T^{2n}(X) \rangle
\end{eqnarray*}
So we have that
\begin{eqnarray*}
\lim_{n, m \rightarrow \infty} ||T^{2n}(X) - T^{2m}(X)||^2 & = & \lim_{n, m \rightarrow \infty} (a_{2n} - 2 a_{n+m} + a_{2m})\\
 & = & \lim_{n, m \rightarrow \infty} (a_{2n} - a_{n+m}) + (a_{2m} - a_{n+m})\\
 & = & 0
\end{eqnarray*}

By the definition of completeness in a Hilbert space, we have that the sequence $\{T^{2n}\}$ converges in the $L_2$-norm to some $Q \in L_2$. Since $T$ is self-adjoint, so must $Q$. To obtain idempotence, note that since $\{T^{4n}\}$ is a subsequence of $\{T^{2n}\}$, it must also converge to $Q$. So for all $X \in \mathcal{H}$, we have that
\begin{eqnarray*}
||Q^2(X) - Q(X)|| & \leq & ||Q^2(X) - T^{4n}(X)|| + ||T^{4n}(X) - T^{2n}(X)|| + ||T^{2n}(X) - Q(X)||
\end{eqnarray*}
Since each of the quantities on the right side converge to 0, $Q$ is idempotent and is hence an orthogonal projection.
\end{proof}

\begin{theorem}
Let $T$ be a linear self-adjoint operator on $L_2 = L_2(\Omega, \mathcal{F}, \mathbb{P})$ with norm $||T|| \leq 1$. Suppose that $X \in L_2$ and that the following limit exists almost surely
\begin{equation}
\lim_{n \rightarrow \infty} \sum_{k=1}^{2^n} 2^{-n} T^{2k}(X)
\label{property_x}
\end{equation}
Then $\lim_{n \rightarrow \infty} T^{2n}(X) = Q(X)$.
\end{theorem}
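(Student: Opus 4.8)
The plan is to deduce the almost-sure statement from the purely numerical Lemma~2 by applying it separately at almost every point $\omega \in \Omega$. Recall from Lemma~3 that the $L_2$-limit $Q(X) = \lim_n T^{2n}(X)$ exists and is an orthogonal projection. Fix $\omega$ and set $a_n = T^{2n}(X)(\omega)$ for $n \geq 1$, together with the candidate value $a_0 = Q(X)(\omega)$. With this labelling the dyadic average $b_m = \sum_{k=1}^{2^m} 2^{-m} a_k$ appearing in Lemma~2 is exactly the quantity $\sum_{k=1}^{2^m} 2^{-m} T^{2k}(X)(\omega)$ whose almost-sure convergence is hypothesis~(\ref{property_x}). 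Thus, once the two hypotheses of Lemma~2 are verified pointwise for almost every $\omega$, part~(2) of that lemma yields $a_n \to a_0$, that is $T^{2n}(X)(\omega) \to Q(X)(\omega)$, which is the assertion.

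First I would establish the summability hypothesis of Lemma~2 at almost every $\omega$. Applying Lemma~1 to the bounded convex sequence $||T^n(X)||^2$ (its convexity was shown in the proof of Lemma~3) gives the $n$-weighted bound
$$\sum_{n=1}^{\infty} n\, ||T^n(X) - T^{n+2}(X)||^2 = ||T(X)||^2 - \lim_{n\to\infty}||T^n(X)||^2 < \infty.$$
Since every summand is nonnegative, Tonelli's theorem lets me move the sum inside the integral, so that $\sum_{n=1}^{\infty} n\,|T^n(X)(\omega) - T^{n+2}(X)(\omega)|^2 < \infty$ for almost every $\omega$. Restricting this sum to the even indices $n = 2k$ and using $k \leq 2k$ then gives $\sum_{k=1}^{\infty} k\,|a_k - a_{k+1}|^2 < \infty$ for almost every $\omega$, which is precisely the condition $c^2 < \infty$ required by Lemma~2.

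It remains to check that the limit of the averages is the correct value $a_0 = Q(X)(\omega)$. Because $T^{2k}(X) \to Q(X)$ in $L_2$, the averages $b_m$ also converge to $Q(X)$ in $L_2$ (averaging a norm-convergent sequence preserves the limit, and $2^m \to \infty$). Hence some subsequence of $\{b_m\}$ converges to $Q(X)$ almost surely; comparing this with the almost-sure limit furnished by~(\ref{property_x}) forces that limit to equal $Q(X)(\omega)$ for almost every $\omega$. With both hypotheses of Lemma~2 in hand at almost every $\omega$, its second conclusion delivers $T^{2n}(X) \to Q(X)$ almost surely.

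The main obstacle I anticipate is the bookkeeping that links the two notions of convergence: Lemma~2 is deterministic, so its summability hypothesis must be extracted pointwise from the integrated bound above (this is where the weight $n$ and Tonelli are essential), and its limit $a_0$ must be pinned down by matching the $L_2$-limit $Q(X)$ against the hypothesized almost-sure limit. Neither step is deep, but both require care so that the finitely many exceptional null sets combine into a single null set off of which Lemma~2 applies.
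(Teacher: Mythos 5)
Your proof is correct and follows essentially the same route as the paper: verify the summability hypothesis of the dyadic-averaging lemma (Lemma 3.2) pointwise, via Tonelli and the $n$-weighted bound coming from Lemma 3.1 applied to the convex sequence $||T^n(X)||^2$, then invoke part (2) of that lemma at almost every $\omega$. If anything, your write-up is more careful than the paper's in two spots: you obtain the needed bound by restricting the weighted sum to even indices (the paper instead says ``apply Lemma 3.3 to $T^2$'', which would control the differences $T^{2n}(X)-T^{2n+4}(X)$ rather than the required $T^{2n}(X)-T^{2n+2}(X)$), and you explicitly identify the almost-sure limit of the averages with $Q(X)$ via $L_2$-convergence of the Ces\`aro means and a subsequence, an identification the paper leaves implicit.
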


\begin{proof}
To prove this, we will use Lemma 3.2. Define $a_n(X) = T^{2n}(X)$. Then to invoke Lemma 3.2 we must show that $c^2 = \sum_{n=1}^{\infty}n|a_n - a_{n+1}|^2 < \infty$ a.s. However, we have that
\begin{eqnarray*}
\int_{\Omega} \sum_{n=1}^{\infty} n |T^{2n}(X) - T^{2(n+1)}(X)|^2 d\mathbb{P} & = & \sum_{n=1}^{\infty} n \int_{\Omega} |T^{2n}(X) - T^{2(n+1)}(X)|^2 d\mathbb{P}\\
 & = & \sum_{n=1}^{\infty} n ||T^{2n}(X) - T^{2(n+1)}(X)||^2\\
 & = & \frac{1}{2} \sum_{n=1}^{\infty} 2n ||T^{2n}(X) - T^{2(n+1)}(X)||^2\\
 & \leq & \frac{1}{2} ||X||^2\\
 & < & \infty
\end{eqnarray*}
Above we used the posititivity of the integrand to swap the sum and the integral, and we applied Lemma 3.3 to the operator $T^2$. Thus $\sum_{n=1}^{\infty} n |T^{2n}(X) - T^{2(n+1)}(X)|^2$ is finite almost surely. Thus by Lemma 3.2 we get the desired conclusion.
\end{proof}

\begin{theorem}
Let $T$ be a linear self-adjoint operator on $L_2 = L_2(\Omega, \mathcal{F}, \mathbb{P})$ that satisfies the following two conditions
\begin{enumerate}
\item $||T(X)||_1 \leq ||X||_1$
\item $||T(X)||_{\infty} \leq ||X||_{\infty}$
\end{enumerate}
Then for all $x \in L_2$ we have that $\lim_{n \rightarrow \infty} T^{2n}(X) = Q(X)$.
\end{theorem}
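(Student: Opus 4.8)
The plan is to deduce this theorem from the preceding one by verifying that its hypothesis---the almost sure existence of the averaged limit $\lim_{n\to\infty}\sum_{k=1}^{2^n}2^{-n}T^{2k}(X)$---holds for every $X \in L_2$ under conditions (1) and (2). Two preliminary points must be settled first. I need $\|T\| \le 1$ on $L_2$ so that both Lemma 3.3 and the preceding theorem apply, and I need to know that $T$ genuinely acts as a contraction on the full scale of $L_p$ spaces so that the machinery of ergodic theory can be brought to bear.

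For the first point I would invoke the Riesz--Thorin interpolation theorem. Since $T$ is a contraction on $L_1$ and on $L_\infty$, interpolating with $\theta = \tfrac12$ between the exponent pairs $(1,1)$ and $(\infty,\infty)$ yields $\|T\|_{L_2 \to L_2} \le 1$; more generally $T$ is a contraction on every $L_p$, $1 \le p \le \infty$. On the probability space $(\Omega,\mathcal F,\mathbb P)$ we have $L_\infty \subset L_2 \subset L_1$ with $L_2$ dense in $L_1$, so the $L_1$-bound in (1) lets $T$ extend uniquely to a contraction of $L_1$, while the $L_\infty$-bound in (2) shows it restricts to a contraction of $L_\infty$. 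Thus $T$ is a Dunford--Schwartz operator, and so is $S := T^2$, being a composition of two such contractions.

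Now observe that the averaged limit in the hypothesis of the preceding theorem is exactly an ergodic average for $S$, namely
$$\sum_{k=1}^{2^n}2^{-n}T^{2k}(X) = \frac{1}{2^n}\sum_{k=1}^{2^n} S^{k}(X),$$
the Ces\`aro mean of the first $2^n$ iterates of $S$ applied to $X$. Because $S$ is an $L_1$--$L_\infty$ contraction and $X \in L_2 \subset L_1$, the Dunford--Schwartz pointwise ergodic theorem guarantees that the Ces\`aro means $\frac1N\sum_{k=0}^{N-1}S^k(X)$ converge almost surely as $N \to \infty$. The means above differ from these only by the boundary term $2^{-n}\big(S^{2^n}X - X\big)$, whose $L_2$-norms are summable along $N = 2^n$ (since $\|S^{2^n}X - X\|_2 \le 2\|X\|_2$ and $\sum_n 2^{-n} < \infty$) and hence tend to $0$ almost surely. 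Therefore the limit in the hypothesis exists almost surely for every $X \in L_2$, and applying the preceding theorem gives $\lim_{n\to\infty}T^{2n}(X) = Q(X)$, where $Q$ is the orthogonal projection furnished by Lemma 3.3.

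The genuine obstacle is the almost sure convergence of the ergodic averages: the $L_2$-convergence of $T^{2n}(X)$ from Lemma 3.3 immediately yields $L_2$-convergence of the Ces\`aro means, but passing to almost sure convergence requires a maximal inequality, and this is precisely the content of the Dunford--Schwartz maximal ergodic theorem for $L_1$--$L_\infty$ contractions (which, crucially, does not require $T$ to be positive). The remaining ingredients---the interpolation step, the extension of $T$ to $L_1$, and the harmless reindexing of the averages---are routine, and once almost sure convergence is in hand the limit is forced to coincide with the $L_2$-limit $Q(X)$ by uniqueness of limits in measure.
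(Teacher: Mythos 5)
Your proposal is correct and follows essentially the same route as the paper: Riesz--Thorin interpolation to get $\|T\|_2 \le 1$, then the Dunford--Schwartz pointwise ergodic theorem applied to the $L_1$--$L_\infty$ contraction $T^2$ to verify the almost sure existence of the averaged limit, and finally an appeal to the preceding theorem. The only difference is one of detail: the paper disposes of the ergodic-theoretic step with a bare citation to Dunford and Schwartz, whereas you spell out the identification of the averages as Ces\`aro means of $T^2$ and the harmless boundary-term correction, both of which are sound.
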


\begin{proof}
To prove this, we invoke the Riesz-Thorin theorem which can be found in \cite{folland} on the interpolation $L_1 \cap L_{\infty} \subset L_2$. In particular we get that $T$ is a bounded operator on $L_2$ and that $||T||_2 \leq \max\left(||T||_1, ||T||_{\infty}\right) = 1$. Furthermore, we get the appropriate condition (\ref{property_x}) by chapter 3 of \cite{dunford}. Thus we conclude by Theroem 3.4 that $\lim_{n \rightarrow \infty} T^{2n}(X) = Q(X)$.
\end{proof}

\section{A Result about Conditional Expectation Operators}

Now that we have the basic lemmas out of the way, we can prove a result about conditional expectation operators. We will use this in our discussion about sufficiency in the next section. We note that the authors in \cite{1961} and \cite{1962} define a conditional expectation operator $\expect{\cdotp|\mathcal{G}}$ to have range $\bar{\mathcal{G}}$ (the completion of the $\sigma$-field $\mathcal{G}$). 

\begin{theorem}
Let $T_1$ and $T_2$ be conditional expectation operators on $(\Omega, \mathcal{F}, \mathbb{P})$. In other words, let $T_1 = \expect{\cdotp | \mathcal{G}_1}$ and $T_2 = \expect{\cdotp | \mathcal{G}_2}$, where $\mathcal{G}_1$ and $\mathcal{G}_2$ are subfields of $\mathcal{F}$. Let $T_{2k-1} = T_1$ and $T_{2k} = T_2$ extend the sequence for all positive integers. Then, we have that for all $X \in L_2$,
$$\lim_{n \rightarrow \infty} S_n(X) = \expect{X|\bar{\mathcal{G}_1} \cap \bar{\mathcal{G}_2}}$$
both in $L_2$ and almost surely.
\end{theorem}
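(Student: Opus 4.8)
The plan is to reduce the alternating product $S_n$ to powers of a single self-adjoint operator so that Lemma 3.3 and Theorem 3.5 apply. Set $R_1 = T_1 T_2 T_1$ and $R_2 = T_2 T_1 T_2$. Since $T_1, T_2$ are self-adjoint idempotent contractions on $L_2$ and are contractions in $L_1$ and $L_\infty$ (standard properties of conditional expectations), each $R_i$ is self-adjoint and is an $L_1$- and $L_\infty$-contraction, so Theorem 3.5 applies to them; moreover each $R_i$ is positive semidefinite, since for instance $\langle R_1 X, X\rangle = \langle T_2(T_1 X), T_1 X\rangle = \|T_2 T_1 X\|^2 \geq 0$. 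The identities $T_1^2 = T_1$ and $T_2^2 = T_2$ give, by induction, $R_1^k = T_1 (T_2 T_1)^k$ and $R_2^k = T_2 (T_1 T_2)^k$, from which one reads off the algebraic relations $S_{2k+1} = R_1^k$ and $S_{2k} = R_2^{k-1} T_1 = T_2 R_1^{k-1}$ (for $k \geq 2$). Thus the odd-indexed iterates are powers of $R_1$, and the even-indexed iterates are powers of $R_2$ precomposed with $T_1$.

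Next I would identify the limiting projections. Write $M_i = L_2(\Omega, \bar{\mathcal{G}_i}, \mathbb{P})$ for the range of $T_i$, and let $M = M_1 \cap M_2$, whose associated orthogonal projection is exactly $P := \expect{\cdotp \mid \bar{\mathcal{G}_1} \cap \bar{\mathcal{G}_2}}$. Lemma 3.3 and Theorem 3.5 give that $R_i^{2n}$ converges, both in $L_2$ and almost surely, to an orthogonal projection $Q_i$, and I claim $Q_1 = Q_2 = P$. Since $R_i$ fixes every element of $M$, we have $M \subseteq \operatorname{range}(Q_i)$. For the reverse inclusion, $R_i^2 Q_i = Q_i$ forces $\operatorname{range}(Q_i) \subseteq \ker(I - R_i^2)$; and if $R_1^2 X = X$ then, chasing equality through the contractions via $\|X\| = \|R_1 X\| = \|T_1 T_2 T_1 X\| \le \|T_2 T_1 X\| \le \|T_1 X\| \le \|X\|$, every inequality is an equality, whence $T_1 X = X$ and $T_2 X = X$, i.e. $X \in M$. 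Hence $\operatorname{range}(Q_i) = M$ and $Q_i = P$, and in particular $P R_i = R_i P = P$ and $P T_i = T_i P = P$.

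The remaining point, which I expect to be the main obstacle, is that almost-sure convergence does not commute with applying a conditional expectation operator, so the even-power conclusion of Theorem 3.5 must be extended to all powers and to the precomposed form without ever applying an operator to an almost-sure limit. The device is to feed Theorem 3.5 a shifted input: for any $Y \in L_2$ one has $R_i^{2m+1} Y = R_i^{2m}(R_i Y) \to Q_i(R_i Y) = P Y$ almost surely (using $P R_i = P$), so combined with the even powers, $R_i^n Y \to PY$ almost surely for every parity of $n$. Applying this to $R_1$ with $Y = X$ gives $S_{2k+1}(X) = R_1^k X \to PX$, and applying it to $R_2$ with the fixed input $Y = T_1 X$ gives $S_{2k}(X) = R_2^{k-1}(T_1 X) \to P(T_1 X) = PX$ (using $P T_1 = P$), both almost surely. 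The corresponding $L_2$ statements are easier: the even powers converge in $L_2$ by Lemma 3.3, odd powers by $L_2$-continuity of $R_i$, and for the even-indexed iterates one may simply use $S_{2k} = T_2 R_1^{k-1} \to T_2 P = P$ by continuity of $T_2$.

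Finally I would assemble the two subsequences. Since both the even- and odd-indexed iterates converge to $PX$ off a single null set (a union of two null sets), the full sequence $S_n(X)$ converges to $PX = \expect{X \mid \bar{\mathcal{G}_1} \cap \bar{\mathcal{G}_2}}$ almost surely, and likewise in $L_2$. I expect the delicate part to be the interchange issue just described: because the individual operators $T_i$ are $L_2$-continuous but do not commute with almost-sure limits, the almost-sure half cannot be obtained by passing to the limit inside $S_n$, and it is the shifted-input trick together with the identification $Q_i = P$ (which rests on the positivity of $R_i$ and the norm-equality kernel computation) that makes the argument go through.
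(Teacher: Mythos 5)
Your proposal is correct, and it diverges from the paper's proof in a meaningful way, so a comparison is worthwhile. Both arguments share the same engine: apply Theorem 3.5 to the self-adjoint, $L_1$- and $L_\infty$-contracting operator $T_1T_2T_1$ to get almost-sure and $L_2$ convergence of its even powers. The difference is how each bridges from that subsequence to the full sequence $S_n$. The paper observes $(T_1T_2T_1)^{2n} = S_{4n+1}$ and then fills in the remaining indices with Lemma 4.2, which shows $S_n(X) - S_{n+1}(X) \to 0$ both in $L_2$ and almost surely (via the telescoping identity $\|S_n(X)-S_{n+1}(X)\|^2 = \|S_n(X)\|^2 - \|S_{n+1}(X)\|^2$); that lemma exists in the paper precisely to serve this role. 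You avoid Lemma 4.2 entirely by introducing the second operator $R_2 = T_2T_1T_2$ and the shifted-input device: since Theorem 3.5 holds for \emph{every} $L_2$ input, applying it to the fixed vectors $R_iY$ and $T_1X$ converts statements about even powers into statements about odd powers and about $S_{2k} = R_2^{k-1}(T_1X)$, without ever pushing an operator through an almost-sure limit. This correctly isolates and resolves the interchange issue, which the paper handles instead through the difference lemma. Your treatment of the limit identification is also notably more complete than the paper's: where the paper asserts that the range of $Q$ is $L_2(\Omega, \bar{\mathcal{G}_1}\cap\bar{\mathcal{G}_2}, \mathbb{P})$ ``by definition of $T_1$ and $T_2$,'' you actually prove it, via $R_1^2X = X \Rightarrow \|T_1X\| = \|T_2T_1X\| = \|X\| \Rightarrow T_1X = X,\ T_2X = X$, which is the cleanest way to pin down $\operatorname{range}(Q) = M_1 \cap M_2$. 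One small remark: the positive semidefiniteness of $R_i$ that you establish is never actually used in your argument (the kernel computation alone suffices), so that observation could be dropped.
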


Before proving this theorem, we state and prove a quick lemma.

\begin{lemma}
Let $X$ be square-integrable. Then
$$\lim_{n \rightarrow \infty} \left(S_n(X) - S_{n+1}(X) \right) = 0$$
both in $L_2$ and almost surely. Furthermore, $\sup_{n \geq 1}|S_n(X) - S_{n+1}(X)| \in L_2$ and has norm less than or equal to $||X||$.
\end{lemma}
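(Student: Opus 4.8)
The plan is to exploit the fact that each factor $T_{n+1}$ appearing in $S_{n+1} = T_{n+1}S_n$ is an orthogonal projection, so that the one-step differences $S_n(X) - S_{n+1}(X)$ become square-summable with total mass controlled by $\|X\|^2$. First I would rewrite the increment as $S_n(X) - S_{n+1}(X) = (I - T_{n+1})S_n(X)$. Since $T_{n+1}$ is an orthogonal projection (Theorem 1.1, parts (2) and (3)), the Pythagorean identity applies to the decomposition $S_n(X) = T_{n+1}S_n(X) + (I - T_{n+1})S_n(X)$, giving
$$\|S_n(X) - S_{n+1}(X)\|^2 = \|S_n(X)\|^2 - \|S_{n+1}(X)\|^2.$$

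Next I would sum this telescoping expression from $n = 1$ to $N$, obtaining $\sum_{n=1}^N \|S_n(X) - S_{n+1}(X)\|^2 = \|S_1(X)\|^2 - \|S_{N+1}(X)\|^2$. Because $S_1 = T_1$ is a contraction we have $\|S_1(X)\| \leq \|X\|$, and the subtracted term is non-negative, so the partial sums are bounded by $\|X\|^2$ uniformly in $N$. Letting $N \to \infty$ yields $\sum_{n=1}^\infty \|S_n(X) - S_{n+1}(X)\|^2 \leq \|X\|^2 < \infty$. The finiteness of this series forces the individual terms $\|S_n(X) - S_{n+1}(X)\|$ to zero, which is exactly the $L_2$ convergence claim.

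For the supremum estimate I would bound the pointwise supremum by the pointwise sum, $\sup_{n \geq 1}|S_n(X) - S_{n+1}(X)|^2 \leq \sum_{n=1}^\infty |S_n(X) - S_{n+1}(X)|^2$, and then integrate over $\Omega$. By Tonelli's theorem the integral of this non-negative series equals $\sum_{n=1}^\infty \|S_n(X) - S_{n+1}(X)\|^2 \leq \|X\|^2$, so taking square roots shows $\sup_{n \geq 1}|S_n(X) - S_{n+1}(X)| \in L_2$ with norm at most $\|X\|$. The same finiteness delivers the almost sure statement at once: since $\int_\Omega \sum_{n=1}^\infty |S_n(X) - S_{n+1}(X)|^2\, d\mathbb{P} < \infty$, the summand is finite almost everywhere, and a convergent series of non-negative terms has terms tending to zero, whence $S_n(X) - S_{n+1}(X) \to 0$ almost surely.

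I do not expect a serious obstacle here; the two points needing care are the justification that $I - T_{n+1}$ is an orthogonal projection, so that the Pythagorean identity is valid (this is precisely part (3) of Theorem 1.1), and the interchange of summation and integration, which is legitimate by Tonelli since every term is non-negative. The essential idea is the telescoping Pythagorean decomposition, which converts the contraction property of conditional expectation operators into square-summability of the one-step increments; everything else is a routine consequence of that summability.
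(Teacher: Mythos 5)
Your proposal is correct and follows essentially the same route as the paper: the identity $\|S_n(X) - S_{n+1}(X)\|^2 = \|S_n(X)\|^2 - \|S_{n+1}(X)\|^2$ (which the paper derives by expanding the inner product using self-adjointness and idempotence of $T_{n+1}$, and which you phrase equivalently as the Pythagorean identity for the orthogonal projection $T_{n+1}$), followed by telescoping, interchange of sum and integral (monotone convergence in the paper, Tonelli in yours), and the pointwise bound $\sup_n |S_n(X) - S_{n+1}(X)|^2 \leq \sum_n |S_n(X) - S_{n+1}(X)|^2$ to get both the almost sure convergence and the $L_2$ bound on the supremum.
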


\begin{proof}
Suppose that $X \in L_2(\Omega, \mathcal{F}, \mathbb{P})$. Then, we have that
\begin{eqnarray*}
||S_n(X) - S_{n+1}(X)||^2 & = & \langle S_n(X) - S_{n+1}(X), S_n(X) - S_{n+1}(X) \rangle\\
 & = & \langle S_{n}(X), S_{n}(X) \rangle - 2 \langle S_{n+1}(X), S_{n}(X) \rangle + \langle S_{n+1}(X), S_{n+1}(X) \rangle\\
 & = & ||S_{n}(X)||^2 - 2 \langle S_{n+1}(X), S_{n}(X) \rangle + ||S_{n+1}(X)||^2\\
 & = & ||S_{n}(X)||^2 - 2 \langle T_{n+1} \cdots T_1(X), T_{n} \cdots T_1(X) \rangle + ||S_{n+1}(X)||^2\\
 & = & ||S_{n}(X)||^2 - 2 \langle T_{n+1} T_{n+1} \cdots T_1(X), T_{n} \cdots T_1(X) \rangle + ||S_{n+1}(X)||^2\\
 & = & ||S_{n}(X)||^2 - 2 \langle T_{n+1} \cdots T_1(X), T_{n+1} T_{n} \cdots T_1(X) \rangle + ||S_{n+1}(X)||^2\\
 & = & ||S_{n}(X)||^2 - ||S_{n+1}(X)||^2
\end{eqnarray*}
Where we used the fact that $T_{n+1}$ is self-adjoint and idempotent since it is a projection operator. Then we will show that
$$\sum_{n=1}^{\infty} |S_n(X(\omega)) - S_{n+1}(X(\omega))|^2 < \infty$$
almost surely. Thus for almost every $\omega \in \Omega$, we have that $\lim_{n \rightarrow \infty} \left(S_n(X(\omega)) - S_{n+1}(X(\omega)) \right) = 0$. To see this we use the monotone convergence theorem in the following computation
\begin{eqnarray*}
\expect{\sum_{n=1}^{\infty} |S_n(X) - S_{n+1}(X)|^2} & = & \expect{\lim_{N \rightarrow \infty}\sum_{n=1}^{N} |S_n(X) - S_{n+1}(X)|^2}\\
 & = & \lim_{N \rightarrow \infty} \expect{\sum_{n=1}^{N} |S_n(X) - S_{n+1}(X)|^2}\\
 & = & \sum_{n=1}^{\infty} \expect{|S_n(X) - S_{n+1}(X)|^2}\\
 & = & \sum_{n=1}^{\infty} ||S_n(X) - S_{n+1}(X)||^2\\
 & = & ||S_1(X)||^2\\
 & = & ||T(X)||^2\\
 & \leq & ||X||^2 < \infty\\
\end{eqnarray*}
In particular we also get that $\expect{|S_n(X) - S_{n+1}(X)|^2} \rightarrow 0$. For the last result, we note that
$$\sup_{n \geq 1}|S_n(X) - S_{n+1}(X)| \leq \left(\sum_{n=1}^{\infty} |S_n(X(\omega)) - S_{n+1}(X(\omega))|^2 \right)^\frac{1}{2}$$
from which it follows that $\sup_{n \geq 1}|S_n(X) - S_{n+1}(X)| \in L_2$ and has norm less than or equal to $||X||$.

\end{proof}

\begin{proof}[Proof of Theorem 4.1]
Let us define $T = T_1 T_2 T_1$. Then $T$ satisfies the conditions of Theorem 3.5. We also have that $T^{2n} = S_{4n+1}$. So we have that $\lim_{n \rightarrow \infty} S_{4n+1}(X) = Q(X)$ almost surely and that $\sup_{n \geq 1}|S_{4n+1}(X)| \in L_2$. Combining this with the previous lemma, we get that $\lim_{n \rightarrow \infty} S_{n}(X) = Q(X)$ both in $L_2$ and almost surely. 

To see that $Q = \expect{\cdotp|\bar{\mathcal{G}_1} \cap \bar{\mathcal{G}_2}}$ we note that by Lemma 3.3, $Q$ is an orthogonal projection. Furthermore, by definition of $T_1$ and $T_2$, its range is $L_2(\Omega, \bar{\mathcal{G}_1} \cap \bar{\mathcal{G}_2}, \mathbb{P})$. However, $\expect{\cdotp|\bar{\mathcal{G}_1} \cap \bar{\mathcal{G}_2}}$ is also an orthogonal projection onto the same subspace. Thus $Q$ is the same operator as $\expect{\cdotp|\bar{\mathcal{G}_1} \cap \bar{\mathcal{G}_2}}$ on $L_2(\Omega, \mathcal{F}, \mathbb{P})$. 
\end{proof}

\section{Application to Sufficiency}

Once again, we fix our measurable space $(\Omega, \mathcal{F})$, and we suppose that we have a collection of probability measures $\{\mathbb{P}_\gamma\}$ for $\gamma \in \Gamma$. We let $\mathcal{N}$ denote the smallest $\sigma$-field containing the sets $N$ that satisfy $\mathbb{P}_\gamma(N) = 0$ for all $\gamma \in \Gamma$. Let us also denote $\mathbb{E}_{\gamma}$ to be expectation with respect to the probability measure $\{\mathbb{P}_\gamma\}$.

\begin{definition}
A $\sigma$-field $\mathcal{G} \subset \mathcal{F}$ is called a sufficient subfield if for each bounded $\mathcal{F}$-measurable function $f$, there is a $\mathcal{G}$-measurable function $g$ that satisfies
$$\mathbb{E}_{\gamma}\left[f \indicator{G} \right] = \mathbb{E}_{\gamma}\left[g \indicator{G} \right]$$
for all $\gamma \in \Gamma$ and for all $G \in \mathcal{G}$. In other words, $\mathcal{G} \subset \mathcal{F}$ is a sufficient subfield if there exists a $g$ such that $g = \mathbb{E}_{\gamma}\left[f | \mathcal{G} \right]$ for all $\gamma \in \Gamma$.

\end{definition}

We consider the following question. If $\mathcal{G}_1, \mathcal{G}_2, ...$ are sufficient subfields of $\mathcal{F}$, which of the following are also sufficient: $\mathcal{G}_1 \cap \mathcal{G}_2$, $\sigma(\mathcal{G}_1 \cup \mathcal{G}_2)$, $\cap_{n=1}^{\infty} \mathcal{G}_n$, $\sigma(\cup_{n=1}^{\infty} \mathcal{G}_n)$. We will show that $\sigma(\mathcal{G}_1 \cap \mathcal{G}_2)$ is not sufficient in general. 

\begin{example}
There is an example where $\sigma(\mathcal{G}_1 \cup \mathcal{G}_2)$ is not sufficient, even though $\mathcal{G}_1$ and $\mathcal{G}_2$ are.
\end{example}

\begin{proof}
Define $\Omega = \{x = (x_1, x_2) \in \mathbb{R}^2 : |x_1| = |x_2| > 0 \}$. Let $\mathbb{P}_x$ be the probability measure that assigns mass $\frac{1}{4}$ to each of the points $(x_1, x_2)$, $(-x_1, x_2)$, $(x_1, -x_2)$, $(-x_1, -x_2)$. We will work with the family of probability measures $\{\mathbb{P}_x\}_{x \in \Omega}$.

In this case, we have that $\mathcal{N} = \{\emptyset, \Omega\}$ since there are no non-trival $\mathbb{P}_x$-null sets. This means that in particular $\mathcal{N}$ is a subfield of any subfield of $\mathcal{F}$. 

Let us define the reflection operators $R_1(X) = (x_1, -x_2)$ and $R_2(X) = (-x_1, x_2)$, and then define the sets $A_{ix} = \{x, R_i(X)\}$ for $i = 1, 2$. Define $\mathcal{G}_i$ to be the smallest $\sigma$-field containing $\{A_{ix} : x \in \Omega\}$ for each $i = 1, 2$. Note that the set $G$ is in $\mathcal{G}_1$ if and only if there is a countable $C \subset \Omega$ such that either $G = \cup_{x \in C} \{A_{1x}\}$ or $G = \left(\cup_{x \in C} \{A_{1x}\}\right)^c$. Similarly $G$ is in $\mathcal{G}_2$ if and only if there is a countable $C \subset \Omega$ such that either $G = \cup_{x \in C} \{A_{2x}\}$ or $G = \left(\cup_{x \in C} \{A_{2x}\}\right)^c$. 

Let us define $\mathcal{G} = \sigma(\mathcal{G}_1 \cup \mathcal{G}_2)$, and also define $\mathcal{F}$ to be the smallest $\sigma$-field containing $\mathcal{G}$ and the set $D = \{x : x \in \Omega, x_1 = x_1\}$. We can also see that 
$$\mathcal{F} = \{(G_1 \cap D) \cup (G_2 \cap D) : G_1, G_2 \in \mathcal{G}\}$$
Thus we have fully specified the probability space we are working with along with a family of probability distributions as well as the subfields in consideration.

We first claim that $\mathcal{G} = \sigma(\mathcal{G}_1 \cup \mathcal{G}_2)$ is not sufficient. Suppose it were suffcient. Then there would exist a $\mathcal{G}$-measurable function $g$ satisfying $\mathbb{P}_x(D \cap G) = \mathbb{E}_{x}\left[g \indicator{G}\right]$ for all $G \in \mathcal{G}$ and all $x \in \Omega$. If we consider the special case of this statement when $G = \{x\}$, then we get that $\mathbb{P}_x(D \cap \{x\}) = \mathbb{E}_{x}\left[g \indicator{\{x\}}\right]$. This statement tells us that $g$ is the characteristic function of the set $D$. However, this is impossible since $D$ is an element of the subfield $\mathcal{G}$. The reason why $D$ cannot be in $\mathcal{G}$ is that all elements of $G$ are either countable or have countable complements as argued earlier. However, both $D$ and $D^c$ are uncountable by construction. So we get a contradiction, and conclude that the subfield $\mathcal{G} = \sigma(\mathcal{G}_1 \cup \mathcal{G}_2)$ is not sufficient.

Our second claim is that both $\mathcal{G}_1$ and $\mathcal{G}_2$ are sufficient subfields of $\mathcal{F}$. We will actually just show that $\mathcal{G}_1$ is sufficient, and then conclude by symmetry that $\mathcal{G}_2$ must also be sufficient. Let $G_1$ and $G_2$ be arbitrary sets in $\mathcal{G}$. Define the functions
\begin{eqnarray*}
f_1 & = & \indicator{G_1 \cap D}\\
f_2 & = & \indicator{G_2 \cap D^2}\\
f & = & f_1 + f_2\\
g_1 & = & f_1 + f_1(R_1(\cdotp))\\
g_2 & = & f_2 + f_2(R_1(\cdotp))\\
g & = & \frac{1}{2}\left(g_1 + g_2\right)
\end{eqnarray*}
We claim that both $g_1$ and $g_2$ are $\mathcal{G}_1$-measurable. We know that $G_1$ is either countable or cocountable. In the case that $G_1$ is countable, then the set $\{x : g_1(X) \neq 0\}$ is countable, and in the case that it is cocountable then the set $\{x : g_1(X) \neq 1\}$ is countable. If we consider an arbitrary Borel set $B \in \mathcal{B}_{\mathbb{R}}$, then we have four possibilities: either none of 0 and 1 are in $B$, both of them are in $B$, 0 is in $B$ but 1 is not, or 1 is in $B$ but 0 is not. In the first case, the preimage of $B$ under $g_1$ is the empty set, and in the second case the preimage is $\Omega$ which are both trivially contained in $\mathcal{G}_1$. In the third case when $0 \in B$ and $1 \notin B$, then by the previous discussion on the countability and cocountability of the sets $\{x : g_1(X) \neq 0\}$ and $\{x : g_1(X) \neq 1\}$, we see that the preimage of $B$ under $g_1$ must be contained in $\mathcal{G}_1$. By the exact same argument, we see that the fourth case also follows. By the same argument given, we can see that $g_2$ is also $\mathcal{G}_1$-measurable, therefore $g$ is also $\mathcal{G}_1$-measurable. 

Let $H$ be a set in $\mathcal{G}_1$, and let $\mathbb{P}_x$ be any probability measure in our family of measures. Then, by construction of $\mathcal{G}_1$, we have that
$$\mathbb{E}_x\left[f(R_1(\cdotp)) \indicator{H}\right] = \mathbb{E}_x\left[f(\cdotp) \indicator{H}\right]$$
Thus, we have that 
\begin{eqnarray*}
\mathbb{E}_x\left[g \indicator{H}\right] & = & \mathbb{E}_x\left[\frac{1}{2}\left(g_1 + g_2\right) \indicator{H}\right]\\
 & = & \mathbb{E}_x\left[\frac{1}{2}\left(f + f(R_1)\right) \indicator{H}\right]\\
 & = & \mathbb{E}_x\left[f \indicator{H}\right]
\end{eqnarray*}
Thus we can conclude that $\mathcal{G}_1$ is a sufficient subfield. By symmetry of the definitions, $\mathcal{G}_2$ is also a sufficient subfield. 

\end{proof}

Despite the negative result for unions of sufficient subfields, we use the results about iterates of conditional expectation operators to show that the result for intersections is positive under the very mild condition that at least one of $\mathcal{G}_1$ or $\mathcal{G}_2$ contain the $\{\mathbb{P}_\gamma\}$-null sets. Note that in the above counterexample, this condition was satisfied since we showed that $\mathcal{N} \subset \mathcal{G}_1$ and $\mathcal{N} \subset \mathcal{G}_2$.

\begin{theorem}
Suppose that $\mathcal{N}$ is contained in one of the subfields $\mathcal{G}_1$ or $\mathcal{G}_2$ (or both). Then the subfield $\mathcal{G} = \mathcal{G}_1 \cap \mathcal{G}_2$ is sufficient.
\end{theorem}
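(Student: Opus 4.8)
The plan is to run the alternating iteration scheme of Theorem 4.1 simultaneously for the entire family $\{\mathbb{P}_\gamma\}$, using sufficiency to make the iterates a single sequence of functions independent of $\gamma$, and then to promote the resulting almost-sure limit to a genuine $\mathcal{G}_1 \cap \mathcal{G}_2$-measurable function by exploiting the hypothesis that $\mathcal{N}$ sits inside $\mathcal{G}_1$ or $\mathcal{G}_2$. The point of the hypothesis is that $\mathcal{N}$ contains every $\mathcal{F}$-measurable common-null set, so whichever $\mathcal{G}_i$ contains $\mathcal{N}$ automatically absorbs all exceptional null sets that arise.

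First I would fix a bounded $\mathcal{F}$-measurable $f$ and invoke sufficiency of $\mathcal{G}_1$ and $\mathcal{G}_2$ to choose $\mathcal{G}_1$- and $\mathcal{G}_2$-measurable functions $T_1 f$ and $T_2 f$ that represent $\mathbb{E}_\gamma[f \mid \mathcal{G}_1]$ and $\mathbb{E}_\gamma[f \mid \mathcal{G}_2]$ simultaneously for all $\gamma$; these can be taken bounded by $\|f\|_\infty$. Iterating, the functions $S_n f = T_n \cdots T_1 f$ form one uniformly bounded sequence not depending on $\gamma$, and for each fixed $\gamma$ this sequence agrees $\mathbb{P}_\gamma$-almost surely with the iterates built from the genuine operators $\mathbb{E}_\gamma[\cdot \mid \mathcal{G}_i]$. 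Applying Theorem 4.1 with the measure $\mathbb{P}_\gamma$, I would conclude that $S_n f$ converges, both in $L_2(\mathbb{P}_\gamma)$ and $\mathbb{P}_\gamma$-almost surely, to $\mathbb{E}_\gamma[f \mid \bar{\mathcal{G}_1} \cap \bar{\mathcal{G}_2}]$, where the completions are with respect to $\mathbb{P}_\gamma$.

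Next I would set $g = \limsup_k S_{2k-1} f$ and $g' = \limsup_k S_{2k} f$; these are defined everywhere and are $\mathcal{G}_1$- and $\mathcal{G}_2$-measurable respectively, as $\limsup$s of functions measurable for the outermost conditioning field. Since the full sequence converges $\mathbb{P}_\gamma$-a.s. for each $\gamma$, both $g$ and $g'$ equal the common limit $\mathbb{P}_\gamma$-a.s., so $N = \{g \neq g'\}$ is $\mathcal{F}$-measurable and $\mathbb{P}_\gamma$-null for every $\gamma$, hence $N \in \mathcal{N}$; likewise $g = \mathbb{E}_\gamma[f \mid \bar{\mathcal{G}_1} \cap \bar{\mathcal{G}_2}]$ $\mathbb{P}_\gamma$-a.s. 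The crux is measurability of $g$ with respect to $\mathcal{G}_1 \cap \mathcal{G}_2$. Assuming $\mathcal{N} \subseteq \mathcal{G}_2$ (the case $\mathcal{N} \subseteq \mathcal{G}_1$ being symmetric with $g'$), for each real $t$ I would write $\{g \le t\} = (\{g \le t\} \cap N^c) \cup (\{g \le t\} \cap N)$. On $N^c$ we have $g = g'$, so the first piece equals $\{g' \le t\} \cap N^c \in \mathcal{G}_2$ since $N \in \mathcal{N} \subseteq \mathcal{G}_2$; the second piece is an $\mathcal{F}$-measurable subset of the common-null set $N$, hence is itself common-null and so lies in $\mathcal{N} \subseteq \mathcal{G}_2$. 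Thus $g$ is $\mathcal{G}_2$-measurable, and being already $\mathcal{G}_1$-measurable it is $\mathcal{G}_1 \cap \mathcal{G}_2$-measurable. Finally, because $\mathcal{G}_1 \cap \mathcal{G}_2 \subseteq \bar{\mathcal{G}_1} \cap \bar{\mathcal{G}_2}$ for each $\gamma$, the defining property of conditional expectation gives $\mathbb{E}_\gamma[g \indicator{G}] = \mathbb{E}_\gamma[f \indicator{G}]$ for all $G \in \mathcal{G}_1 \cap \mathcal{G}_2$ and all $\gamma$, which is precisely sufficiency of $\mathcal{G}_1 \cap \mathcal{G}_2$.

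I expect the main obstacle to be this measurability upgrade: Theorem 4.1 only guarantees the limit to be measurable for the $\gamma$-dependent completions $\bar{\mathcal{G}_1} \cap \bar{\mathcal{G}_2}$, and one must transfer this to membership in the uncompleted field $\mathcal{G}_1 \cap \mathcal{G}_2$, which is exactly what the null-set decomposition above accomplishes once one $\mathcal{G}_i$ swallows $\mathcal{N}$. A secondary point requiring care is justifying that sufficiency permits the iterates $S_n f$ to be realized as a single sequence of functions valid for every $\gamma$ at once, so that one application of Theorem 4.1 per measure suffices.
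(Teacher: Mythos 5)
Your proposal is correct and takes essentially the same route as the paper: use sufficiency to build a single $\gamma$-independent alternating sequence of iterates, apply Theorem 4.1 measure by measure, exploit $\mathcal{N} \subseteq \mathcal{G}_2$ to transfer measurability of the almost-sure limit from the completed field down to $\mathcal{G}_1 \cap \mathcal{G}_2$, and finish via the defining property of conditional expectation (the paper phrases this last step as the tower property). Your explicit level-set decomposition of $\{g \le t\}$ just spells out in detail the measurability upgrade that the paper compresses into the assertion that $g - h$ is $\mathcal{G}_2$-measurable because $\{g \neq h\} \in \mathcal{N}$.
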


\begin{proof}
Once again, we work in the general setting of the probability space $(\Omega, \mathcal{F}, \{\mathbb{P}_\gamma\})$ and we work with arbitrary sufficient subfields of $\mathcal{F}$,  $\mathcal{G}_1$ and $\mathcal{G}_2$. We can suppose without loss of generality that $\mathcal{N} \subset \mathcal{G}_2$. If not we just swap $\mathcal{G}_1$ and $\mathcal{G}_2$. Let us define the sequence of $\sigma$-fields as follows. For $k \geq 1$ define
\begin{eqnarray*}
\mathcal{G}_{2k - 1} & = & \mathcal{G}_1\\
\mathcal{G}_{2k} & = & \mathcal{G}_2
\end{eqnarray*}
Fix a function $f$ that is $\mathcal{F}$-measurable and bounded. By the definition of sufficiency, there exists a function $g_1$ satisfying 
$$g_1 = \mathbb{E}_{\gamma}\left[f | \mathcal{G_1} \right]$$
for all $\gamma \in \Gamma$. Similarly, by use of sufficiency again, we can inductively define $g_n$ to be the function that satisfies
$$g_n = \mathbb{E}_{\gamma}\left[g_{n-1} | \mathcal{G_1} \right]$$
for all $\gamma \in \Gamma$. Let us define the function $g(X)$ and $h(X)$ pointwise as follows
$$
g(X) = \left\{ 
 \begin{array}{ll}
 \lim_{n \rightarrow \infty} g_{2n-1}(X) & \mbox{if the limit exists} \\
 0 & \mbox{otherwise}
 \end{array} \right.
$$
$$
h(X) = \left\{ 
 \begin{array}{ll}
 \lim_{n \rightarrow \infty} g_{2n}(X) & \mbox{if the limit exists} \\
 0 & \mbox{otherwise}
 \end{array} \right.
$$
Thus by applying Theorem 4.1, we see that 
$$\lim_{n \rightarrow \infty} g_n = \expect{f|\bar{\mathcal{G}_1} \cap \bar{\mathcal{G}_2}}$$
where we denote $\bar{\mathcal{G}_i}$ to be the completion of the $\sigma$-field $\mathcal{G}_1$ with respect to the set of probability measures $\{\mathbb{P}_\gamma\}$. Thus we have that $g = \expect{f|\bar{\mathcal{G}_1} \cap \bar{\mathcal{G}_2}}$ and that $\{g(X) \neq h(X)\} \in \mathcal{N}$. The reason why this is true is that the convergence in Theorem 4.1 is almost surely. In particular we have that $g - h$ is $\mathcal{G}_2$-measurable since $\mathcal{G}_2$ contains $\mathcal{N}$. Thus by measurability of $g - h$ and $h$ with respect to $\mathcal{G}_2$, we can conclude that $g$ is also $\mathcal{G}_2$-measurable. By construction, $g$ is also $\mathcal{G}_1$-measurable. Thus $g$ is $\mathcal{G}_1 \cap \mathcal{G}_2$-measurable. 

We then note that $\bar{\mathcal{G}_1} \cap \bar{\mathcal{G}_2} \supset \mathcal{G}_1 \cap \mathcal{G}_2$ and by the tower and idempotence properties of conditional expectations, 
\begin{eqnarray*}
\mathbb{E}_{\gamma}\left[f | \mathcal{G}_1 \cap \mathcal{G}_2\right] & = & \mathbb{E}_{\gamma}\left[\mathbb{E}_{\gamma}\left[f | \bar{\mathcal{G}_1} \cap \bar{\mathcal{G}_2}\right] | \mathcal{G}_1 \cap \mathcal{G}_2\right]\\
 & = & \mathbb{E}_{\gamma}\left[g | \mathcal{G}_1 \cap \mathcal{G}_2\right]\\
 & = & g
\end{eqnarray*}
Thus we see that the subfield $\mathcal{G}_1 \cap \mathcal{G}_2$ satisfies the definition of sufficiency.
\end{proof}

We now consider the remaining two questions concerning the sufficiency of $\cap_{n=1}^{\infty} \mathcal{G}_n$ and $\sigma(\cup_{n=1}^{\infty} \mathcal{G}_n)$. By the counter example given, we conclude that the second subfield may not be sufficient in general, by taking $\mathcal{G}_n = \mathcal{G}_2$ for $n \geq 2$ and then invoking the above example. However, we will answer the question about the sufficiency of $\cap_{n=1}^{\infty} \mathcal{G}_n$ affirmatively.

\begin{theorem}
Let $\{\mathcal{G}_n\}_{n=1}^{\infty}$ be a countable collection of sufficient subfields. Then if $\mathcal{G}_{n} \supset \mathcal{G}_{n+1}$ for all $n \geq 1$, then $\cap_{n=1}^{\infty} \mathcal{G}_n$ is sufficient.
\end{theorem}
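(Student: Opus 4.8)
The plan is to recognize this as the monotone (nested) case, for which the backward martingale convergence theorem does all the work; the elaborate Hilbert-space machinery of Section 3 was needed only for the non-nested alternating setting of Theorem 4.1. First I would fix a bounded $\mathcal{F}$-measurable $f$ and, using the sufficiency of each $\mathcal{G}_n$, extract a single $\mathcal{G}_n$-measurable function $g_n$ with $g_n = \mathbb{E}_{\gamma}\left[f \mid \mathcal{G}_n\right]$ for every $\gamma \in \Gamma$ simultaneously. Because the chain is nested with $\mathcal{G}_{n+1} \subset \mathcal{G}_n$, the iterates $S_n = T_n \cdots T_1$ of the earlier framework collapse: applying the finer conditional expectation first and then the coarser one returns the coarser one, so $S_n = T_n = \mathbb{E}\left[\,\cdot \mid \mathcal{G}_n\right]$ and the only object that matters is the sequence $\{g_n\}$ itself. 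The key structural observation is that, by the tower property together with $\mathcal{G}_{n+1} \subset \mathcal{G}_n$, one has $\mathbb{E}_{\gamma}\left[g_n \mid \mathcal{G}_{n+1}\right] = g_{n+1}$ $\mathbb{P}_\gamma$-a.s., so $\{g_n\}$ is a backward martingale under every $\mathbb{P}_\gamma$.

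Next I would invoke the backward (reverse) martingale convergence theorem: for each fixed $\gamma$, the backward martingale $\{g_n\}$ converges both $\mathbb{P}_\gamma$-almost surely and in $L_1(\mathbb{P}_\gamma)$ to $\mathbb{E}_{\gamma}\left[f \mid \mathcal{G}_\infty\right]$, where $\mathcal{G}_\infty = \cap_{n=1}^{\infty} \mathcal{G}_n$ is the honest intersection. Following the device already used in the proofs of Theorems 5.1 and 5.2, I would then define $g$ pointwise as $\lim_{n \to \infty} g_n$ on the set where this limit exists and as $0$ elsewhere, producing one fixed candidate function rather than a family of $\gamma$-dependent versions.

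Two verifications remain. First, $g$ is $\mathcal{G}_\infty$-measurable: for every $N$ the tail $\{g_n\}_{n \geq N}$ consists of $\mathcal{G}_n$-measurable functions with $\mathcal{G}_n \subset \mathcal{G}_N$, hence of $\mathcal{G}_N$-measurable functions, so $\liminf g_n$, $\limsup g_n$, and the set on which they agree are all $\mathcal{G}_N$-measurable, whence $g$ is $\mathcal{G}_N$-measurable; letting $N$ range gives measurability with respect to $\mathcal{G}_N$ for every $N$, and therefore with respect to $\mathcal{G}_\infty$. Second, for each fixed $\gamma$ the set where the limit exists has $\mathbb{P}_\gamma$-probability one, and on it $g = \mathbb{E}_{\gamma}\left[f \mid \mathcal{G}_\infty\right]$, so $g$ is a version of $\mathbb{E}_{\gamma}\left[f \mid \mathcal{G}_\infty\right]$ for every $\gamma$. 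Testing against an arbitrary $G \in \mathcal{G}_\infty$ then yields $\mathbb{E}_{\gamma}\left[f \indicator{G}\right] = \mathbb{E}_{\gamma}\left[g \indicator{G}\right]$ for all $\gamma$, which is exactly the definition of sufficiency of $\mathcal{G}_\infty$.

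The main obstacle is the one the pointwise-limit construction is designed to overcome: the backward martingale theorem supplies convergence only up to a $\mathbb{P}_\gamma$-null set that may depend on $\gamma$, yet sufficiency demands a single $g$ serving all measures at once. Defining $g$ as the everywhere-pointwise limit resolves this, since the exceptional set is absorbed into the "otherwise $0$" branch separately for each $\gamma$. It is worth noting that, unlike Theorem 5.2, no hypothesis on the null-set field $\mathcal{N}$ is required here: monotonicity makes the tail-measurability argument deliver genuine $\mathcal{G}_\infty$-measurability directly, with no passage to completions, which is precisely why the intersection of a decreasing chain behaves better than the intersection of two unrelated subfields.
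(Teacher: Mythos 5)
Your proposal is correct and follows essentially the same route as the paper: the paper's appeal to ``the continuity of conditional expectations'' is precisely the backward (Lévy downward) martingale convergence theorem you invoke, and both arguments hinge on the same pointwise-limit definition of $g$ together with the observation that a limit of $\mathcal{G}_N$-measurable tails is measurable with respect to $\cap_{n=1}^{\infty}\mathcal{G}_n$. You merely fill in details the paper leaves implicit (the backward-martingale structure of $\{g_n\}$ and the tail-measurability argument), which strengthens rather than changes the proof.
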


\begin{proof}
By definition of sufficiency, if we are given any bounded $\mathcal{F}$-measurable function $f$, there exists a $\mathcal{G}_{n}$-measurable function $g_n$ such that $g_n = \mathbb{E}_{\gamma}\left[f|\mathcal{G}_{n}\right]$ for all $\gamma \in \Gamma$. Define the function $g$ pointwise by
$$
g(X) = \left\{ 
 \begin{array}{ll}
 \lim_{n \rightarrow \infty} g_{n}(X) & \mbox{if the limit exists} \\
 0 & \mbox{otherwise}
 \end{array} \right.
$$

Suppose that we are in the first situation, where we have a decreasing sequence of $\sigma$-fields. Then we have that $g$ is $\cap_{n=1}^{\infty} \mathcal{G}_n$-measurable. By the continuity of conditional expectations, we have that 
$$\mathbb{E}_{\gamma}\left[f | \cap_{n=1}^{\infty} \mathcal{G}_{n}\right] = \lim_{n \rightarrow \infty} g_n$$
Thus $g = \mathbb{E}_{\gamma}\left[f | \cap_{n=1}^{\infty} \mathcal{G}_{n}\right]$ for all $\gamma \in \Gamma$. So we conclude that $\cap_{n=1}^{\infty} \mathcal{G}_n$ is a sufficient subfield.
\end{proof}

We combine the above with the theorem of the sufficiency of the intersection of two subfields to get a proof for the sufficiency of a countable intersection below.

\begin{theorem}
Let $\{\mathcal{G}_n\}_{n=1}^{\infty}$ be a countable collection of sufficient subfields such that $\mathcal{N} \subset \mathcal{G}_n$ for all $n \geq 1$. Then $\cap_{n=1}^{\infty} \mathcal{G}_n$ is also sufficient.
\end{theorem}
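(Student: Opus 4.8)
The plan is to reduce the general countable intersection to the two cases already established: the pairwise intersection of Theorem 5.1 and the monotone (decreasing) countable intersection of Theorem 5.3. The bridge between them is the sequence of finite intersections. First I would define, for each $n \geq 1$, the finite intersection $\mathcal{H}_n = \cap_{k=1}^{n} \mathcal{G}_k$; since a finite intersection of $\sigma$-fields is again a $\sigma$-field, each $\mathcal{H}_n$ is a legitimate subfield of $\mathcal{F}$.

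The central claim to establish is that every $\mathcal{H}_n$ is a sufficient subfield, which I would prove by induction on $n$. The base case $\mathcal{H}_1 = \mathcal{G}_1$ holds by hypothesis. For the inductive step, I would write $\mathcal{H}_{n+1} = \mathcal{H}_n \cap \mathcal{G}_{n+1}$ and invoke Theorem 5.1 with the two sufficient subfields $\mathcal{H}_n$ (sufficient by the inductive hypothesis) and $\mathcal{G}_{n+1}$ (sufficient by hypothesis). To apply Theorem 5.1 I must verify its hypothesis that $\mathcal{N}$ is contained in at least one of the two subfields being intersected; this holds because each $\mathcal{G}_k$ contains $\mathcal{N}$, so the intersection $\mathcal{H}_n$ contains $\mathcal{N}$ as well. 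Hence $\mathcal{H}_{n+1}$ is sufficient, completing the induction.

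Next I would observe that the sequence $\{\mathcal{H}_n\}$ is decreasing: $\mathcal{H}_{n+1} = \mathcal{H}_n \cap \mathcal{G}_{n+1} \subset \mathcal{H}_n$, so $\mathcal{H}_n \supset \mathcal{H}_{n+1}$ for all $n \geq 1$. Since each $\mathcal{H}_n$ is a sufficient subfield and the sequence is decreasing, Theorem 5.3 applies directly and yields that $\cap_{n=1}^{\infty} \mathcal{H}_n$ is sufficient. Finally I would note the set-theoretic identity $\cap_{n=1}^{\infty} \mathcal{H}_n = \cap_{n=1}^{\infty} \cap_{k=1}^{n} \mathcal{G}_k = \cap_{n=1}^{\infty} \mathcal{G}_n$, which identifies this limit with the desired intersection and completes the proof.

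This argument is essentially a clean reduction rather than a source of new analytic difficulty, so I do not anticipate a hard obstacle. The one point requiring care, and the step I would single out, is the verification of the $\mathcal{N}$-hypothesis of Theorem 5.1 at each stage of the induction: it is precisely the blanket assumption $\mathcal{N} \subset \mathcal{G}_n$ for all $n$ that guarantees the finite intersections $\mathcal{H}_n$ retain the null sets, keeping Theorem 5.1 applicable throughout. Without this uniform containment the induction would break at the first stage where neither $\mathcal{H}_n$ nor $\mathcal{G}_{n+1}$ contained $\mathcal{N}$, which is exactly why the hypothesis is imposed for every $n$ rather than for a single distinguished subfield.
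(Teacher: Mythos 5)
Your proposal is correct and follows essentially the same route as the paper: define the finite intersections $\mathcal{H}_n = \cap_{k=1}^{n} \mathcal{G}_k$, prove their sufficiency by induction via the pairwise-intersection theorem, and then apply the decreasing-sequence theorem together with the identity $\cap_{n=1}^{\infty} \mathcal{H}_n = \cap_{n=1}^{\infty} \mathcal{G}_n$. Your explicit verification that each $\mathcal{H}_n$ contains $\mathcal{N}$ (so the pairwise theorem remains applicable at every inductive step) is a detail the paper leaves implicit, and it is a worthwhile addition.
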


\begin{proof}
Define $\mathcal{H}_n = \cap_{k=1}^{n} \mathcal{G}_k$. Then, by applying induction to Theorem 5.1, we see that for each $n \geq 1$, $\mathcal{H}_n$ is sufficient. Since $\mathcal{H}_{n} \supset \mathcal{H}_{n+1}$, we conclude by Theorem 5.2 that $\cap_{n=1}^{\infty} \mathcal{H}_n$ is also sufficient. But $\cap_{n=1}^{\infty} \mathcal{G}_n = \cap_{n=1}^{\infty} \mathcal{H}_n$, so $\cap_{n=1}^{\infty} \mathcal{G}_n$ is a sufficient subfield.  
\end{proof}

\bibliographystyle{plain}
\bibliography{biblio}

\begin{thebibliography}{1}

\bibitem{1962}
D.~L. Burkholder.
\newblock On the order structure of the set of sufficient subfields.
\newblock {\em The Annals of Mathematical Statistics}, 33(2):596--599, 1962.

\bibitem{1961}
D.~L. Burkholder and Y.~S. Chow.
\newblock Iterates of conditional expectation operators.
\newblock {\em Proceedings of the American Mathematical Society},
  12(3):490--495, 1961.

\bibitem{dunford}
Neilson Dunford and Jacob~T. Schwartz.
\newblock {\em Linear Operators, Spectral Theory, Self Adjoint Operators in
  Hilbert Space, Part 2}.
\newblock Wiley-Interscience, April 1988.

\bibitem{folland}
Gerald~B. Folland.
\newblock {\em Real Analysis: Modern Techniques and Their Applications}.
\newblock Wiley-Interscience, second edition, 1999.

\bibitem{schervish}
Mark~J. Schervish.
\newblock {\em Theory of Statistics (Springer Series in Statistics)}.
\newblock Springer, 1st ed. 1995. corr. 2nd printing edition, January 1997.

\bibitem{Shorack}
G.~Shorack.
\newblock {\em Probability for Statisticians}.
\newblock Springer Texts in Statistics. Springer-Verlag, New York, first
  edition, 2000.

\end{thebibliography}

\end{document}